\newtheorem{definition}{Definition}
\newtheorem{theorem}{Theorem}
\newtheorem{lemma}{Lemma}
\newtheorem{example}{Example}
\newtheorem{corollary}{Corollary}
\begin{document}
\thispagestyle{empty} \setcounter{page}{1}



\begin{center}
{\Large\bf  A  $\phi$ - contraction Principle in Partial Metric Spaces with Self-distance Terms}

\vskip.20in

T. Abdeljawad$^{a}$, Y. Zaidan$^{b,c}$, N. Shahzad $^{d,}$ \footnote{Corresponding Author E-Mail
Address:  nshahzad@kau.edu.sa} 
\\[2mm]
{\footnotesize $^a$Department of
Mathematics, \c{C}ankaya University, 06530, Ankara, Turkey}\\
{\footnotesize $^b$Department of Mathematics and Physical
Sciences,  Prince Sultan University\\
{\footnotesize P. O. Box 66833, Riyadh 11586, Saudi Arabia}\\
{\footnotesize $^c$Department of Mathematics, University of Wisconsin--Fox Valley}\\
{\footnotesize Menasha, WI 54952, USA}\\
{\footnotesize $^d$ Department of  Mathematics, Faculty of Sciences, King Abdulaziz University}\\
{\footnotesize Jedda, Saudi Arabia}}\\

\end{center}

\vskip.2in

{\footnotesize \noindent {\bf Abstract.} We prove a generalized contraction principle with control function in complete partial metric spaces. The contractive type condition used allows the appearance  of self distance terms. The obtained result generalizes some previously  obtained results such as the very recent " D. Ili\'{c}, V. Pavlovi\'{c} and V. Rako\u{c}evi\'{c}, Some new
extensions of Banach's contraction principle to partial metric
spaces, Appl. Math. Lett. 24 (2011), 1326--1330". An example is given to illustrate the generalization and its properness. Our presented example does not verify the contractive type conditions of the main results  proved recently  by S. Romaguera in " Fixed point theorems for generalized contractions on partial metric spaces, Topology Appl. 159 (2012), 194-199" and by I. Altun, F. Sola and H. Simsek in "Generalized contractions on partial metric spaces,
Topology and Its Applications 157 (18) (2010),  2778--2785". Therefore, our results have an advantage over the previously obtained.
\\
\\
{\bf Keywords.} Partial metric space, Banach contraction principle, Fixed point.}

\vskip.1in

\section{Introduction and Preliminaries} \label{s:1}
Banach contraction mapping principle is considered to be the key  of many extended fixed point theorems. It has widespread  applications in many branches of mathematics, engineering and computer.
Perviously many authors were able  to generalize this principle ( \cite{Khan}, \cite{Rhoades}, \cite{Dutta},\cite{BW1969}). After the appearance of partial metric spaces as a place for distinct research work into flow analysis, non-symmetric topology and domain theory (\cite{Mat92}, \cite{Mat94}), many authors started to generalize this principle to these spaces (see \cite{TEKcommon}, \cite{TEKcontrol}, \cite{Thweakly}, \cite{0V2004},\cite{02005}, \cite{IATOP2010}, \cite{IAFTPA2011}, \cite{wasfi}, \cite{Romag}, \cite{ATR}, \cite{Altun 2011} ). However, the contraction type conditions used in those generalizations do not reflect the structure of partial metric space apparently. Later, the authors in \cite{Rako} proved a more reasonable contraction principle in partial metric space in which  they used self distance terms. In this article we present a $\phi-$contraction principle in partial metric spaces. The presented contractive condition allows the self-distance to appear so that completeness, rather than the 0-completeness, of the partial metric space is needed.

We recall some definitions of partial metric spaces and state some
of their properties. A partial metric space (PMS) is a pair $(X,
p:X\times X\rightarrow\mathbb R^+)$ (where $\mathbb R^+$ denotes
the set of all non negative real numbers) such that
\begin{itemize}
\item[]
\begin{itemize}
    \item [(P1)] $p(x,y) =p(y,x)$\;\;(symmetry);
    \item [(P2)] If $0\leq p(x,x) = p(x,y) = p(y,y)$ then $x=y$\;\;
(equality);
    \item [(P3)] $p(x,x) \leq p(x,y)$\;\;   (small
    self--distances);

    \item [(P4)] $p(x,z) + p(y,y) \leq p(x,y) + p(y,z)$\;\;
(triangularity);
\end{itemize}
\end{itemize}
for all $x,y,z \in X$.

For a partial metric $p$ on $X$,  the function $p^s : X \times X
\rightarrow \mathbb R^+$ given by
\begin{equation}
p^s(x,y)=2p(x,y)-p(x,x)-p(y,y) \label{elma1}
\end{equation}
is a (usual) metric on $X$. Each partial metric $p$ on $X$
generates a $T_0$ topology $\tau_p$ on $X$ with a base of the
family of open $p$-balls $\{B_p(x, \varepsilon) : x\in X,
\varepsilon> 0\}$, where $B_p(x, \varepsilon) =\{y \in X : p(x, y)
< p(x, x) +\varepsilon\}$ for all $x \in X$ and $\varepsilon >0 $.

\begin{definition}
\emph{\cite{Mat94}}
\begin{itemize}
\item[$(i)$] A sequence $\{x_n\}$ in a PMS $(X,p)$ converges to
$x\in X$ if and only if $p(x,x)= \lim_{n \rightarrow \infty}
p(x,x_n)$. \item[$(ii)$] A sequence $\{x_n\}$ in a PMS $(X,p)$ is
called Cauchy if and only if $\lim_{n,m\rightarrow \infty}
p(x_n, x_m)$ exists (and finite). \item[$(iii)$] A PMS $(X,p)$ is
said to be complete if every Cauchy sequence $\{x_n\}$ in $X$
converges, with respect to $\tau_p$, to a point $x\in X$ such that
$p(x, x)=\lim_{n,m_\rightarrow \infty} p(x_n, x_m)$. \item[$(iv)$]
A mapping $T:X \rightarrow X$ is said to be continuous at $x_0 \in
X$, if for every $ \varepsilon> 0$, there exists $\delta > 0$ such
that $T(B_p(x_0,\delta))\subset B_p(T(x_0),\varepsilon)$.
\end{itemize}
\end{definition}

\begin{lemma}\emph{\cite{Mat94}}\label{lem-Mat}
\begin{enumerate}
\item[$(a1)$] A sequence $\{x_n\}$ is Cauchy in a PMS $(X,p)$ if
and only if $\{x_n\}$ is Cauchy in a metric space $(X,p^s)$.
\item[$(a2)$] A  PMS $(X,p)$ is complete  if and only if the
metric space $(X,p^s)$ is complete. Moreover,
\begin{equation}
\lim_{n \rightarrow \infty}p^s(x,x_n)=0 \Leftrightarrow p(x,
x)=\lim_{n \rightarrow \infty}p(x,x_n)=\lim_{n,m_\rightarrow
\infty} p(x_n, x_m).
 \label{Mat}
\end{equation}

\end{enumerate}
\end{lemma}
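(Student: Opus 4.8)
The plan is to reduce everything to one elementary inequality,
\begin{equation*}
|p(x,x)-p(y,y)|\le p^{s}(x,y)\qquad (x,y\in X),\tag{$\star$}
\end{equation*}
which says that self-distances cannot move faster than the associated metric $p^{s}$. To obtain $(\star)$ I would expand, using \eqref{elma1}, $p(x,x)-p(y,y)=-p^{s}(x,y)+2\bigl(p(x,y)-p(y,y)\bigr)$, note that the last bracket is $\ge 0$ by (P3), and then interchange $x$ and $y$ (using symmetry of $p$, hence of $p^{s}$) to get the reverse bound. Alongside $(\star)$ I would record the trivial rewriting of \eqref{elma1} as a \emph{decomposition} $p(x_{n},x_{m})=\tfrac12 p^{s}(x_{n},x_{m})+\tfrac12 p(x_{n},x_{n})+\tfrac12 p(x_{m},x_{m})$, and the fact (again from (P3)) that $p^{s}\ge 0$; these two observations let me pass back and forth between $p$-statements and $p^{s}$-statements.

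For $(a1)$, the implication ``$p$-Cauchy $\Rightarrow$ $p^{s}$-Cauchy'' is immediate: if $\lim_{n,m}p(x_{n},x_{m})=a$ exists and is finite, then restricting to the diagonal gives $\lim_{n}p(x_{n},x_{n})=a$, and substituting into the decomposition yields $p^{s}(x_{n},x_{m})\to 2a-a-a=0$. For the converse, assume $p^{s}(x_{n},x_{m})\to 0$. Then $(\star)$ gives $|p(x_{n},x_{n})-p(x_{m},x_{m})|\le p^{s}(x_{n},x_{m})\to 0$, so $\{p(x_{n},x_{n})\}$ is a Cauchy sequence of reals, say $p(x_{n},x_{n})\to L\ge 0$; feeding this into the decomposition gives $\lim_{n,m}p(x_{n},x_{m})=0+\tfrac12 L+\tfrac12 L=L$, finite, so $\{x_{n}\}$ is $p$-Cauchy.

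Next I would prove the displayed equivalence \eqref{Mat}. If $p^{s}(x,x_{n})\to 0$, then $(\star)$ (with the second variable held at $x$) gives $p(x_{n},x_{n})\to p(x,x)$, and the decomposition applied to the pair $(x,x_{n})$ then gives $p(x,x_{n})\to p(x,x)$; moreover $p^{s}(x_{n},x_{m})\le p^{s}(x_{n},x)+p^{s}(x,x_{m})\to 0$ by the triangle inequality for the metric $p^{s}$, so by the converse part of $(a1)$, $\lim_{n,m}p(x_{n},x_{m})=\lim_{n}p(x_{n},x_{n})=p(x,x)$. Conversely, if $p(x,x)=\lim_{n}p(x,x_{n})=\lim_{n,m}p(x_{n},x_{m})$, the diagonal gives $p(x_{n},x_{n})\to p(x,x)$ and then the decomposition gives $p^{s}(x,x_{n})=2p(x,x_{n})-p(x,x)-p(x_{n},x_{n})\to 0$.

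Finally $(a2)$ is bookkeeping layered on $(a1)$ and \eqref{Mat}, together with the definition of $\tau_{p}$-convergence in Definition~1(i). If $(X,p^{s})$ is complete and $\{x_{n}\}$ is $p$-Cauchy, then it is $p^{s}$-Cauchy by $(a1)$, hence $p^{s}(x,x_{n})\to 0$ for some $x$, and \eqref{Mat} plus Definition~1(i) show that $x$ is the $\tau_{p}$-limit of $\{x_{n}\}$ with $p(x,x)=\lim_{n,m}p(x_{n},x_{m})$; thus $(X,p)$ is complete. Conversely, if $(X,p)$ is complete and $\{x_{n}\}$ is $p^{s}$-Cauchy, it is $p$-Cauchy by $(a1)$, so it $\tau_{p}$-converges to some $x$ with $p(x,x)=\lim_{n,m}p(x_{n},x_{m})$; Definition~1(i) also gives $p(x,x)=\lim_{n}p(x,x_{n})$, and then \eqref{Mat} yields $p^{s}(x,x_{n})\to 0$, so $(X,p^{s})$ is complete. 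There is no deep obstacle in any of this; the only points requiring a little care are the two-line proof of $(\star)$ and the (standard) fact that a convergent double limit controls its diagonal.
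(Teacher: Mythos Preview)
Your argument is correct. The paper itself does not supply a proof of this lemma; it is quoted verbatim from Matthews \cite{Mat94} and used as a black box, so there is no ``paper's own proof'' to compare against. Your route via the elementary inequality $|p(x,x)-p(y,y)|\le p^{s}(x,y)$ together with the algebraic decomposition $p(x_{n},x_{m})=\tfrac12 p^{s}(x_{n},x_{m})+\tfrac12 p(x_{n},x_{n})+\tfrac12 p(x_{m},x_{m})$ is the standard one, and each implication you sketch goes through as written.
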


A sequence $\{x_n\}$ is called
0-Cauchy \cite{Rako} if $\mbox{lim}_{m,n} p(x_n,x_m)=0$. The partial metric space $(X,p)$ is called 0-complete if every 0-Cauchy sequence in $x$ converges to a point $x\in X$ with respect to $p$ and $p(x,x)=0$. Clearly, every complete partial metric space is complete. The converse need not be true.

\begin{example} \label{neednot} (see \cite{Rako})
Let $X=\mathbb{Q}\cap [0,\infty)$ with the partial metric $p(x,y)=max\{x,y\}$. Then $(X,p)$ is a 0-complete partial metric space which is not complete.
\end{example}

\begin{definition} \label{min radius}
Let $(X,p)$ be a complete metric space. Set $\rho_p=\mbox {inf}\{p(x,y) : x,y \in X\}$ and define $X_p=\{x \in X : p(x,x)=\rho_p\}$.
\end{definition}

\begin{theorem} \emph{\cite{Rako}} \label{Rako}
Let $(X,p)$ be a complete metric space, $\alpha \in [0,1)$ and $T
: X\rightarrow X$ a given mapping. Suppose that for each $x, y \in
X$ the following condition holds $$p(x,y)\leq \mbox{max}\{\alpha
p(x,y), p(x,x), p(y,y)\}.$$ Then
\begin{itemize}
\item[(1)] the set $X_p$ is nonempty; \item[(2)] there is a unique
$u \in X_p$ such that $Tu=u$; \item[(3)] for each $x \in X_p$ the
sequence $\{T^nx\}_{n\geq1}$ converges with respect to the metric
$p^s$ to $u$.
\end{itemize}
\end{theorem}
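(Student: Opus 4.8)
\noindent\emph{Sketch of the proof.} The plan is to first show that $X_p\neq\emptyset$, and then to deduce (2) and (3) by applying the classical Banach contraction principle to the restriction of $T$ to $X_p$ equipped with the metric $p^s$; on $X_p$ the function $p$ and the metric $p^s$ differ only by the constant $2\rho$, which is what makes this transfer possible. Throughout write $\rho=\rho_p$, and note that by (P3) one has $\rho=\inf\{p(x,x):x\in X\}$. I first record a fact about Picard iterates. Fix $x_0\in X$, put $x_n=T^nx_0$ and $a_n=p(x_n,x_n)$. Applying the contractive condition $p(Tx,Ty)\le\max\{\alpha p(x,y),p(x,x),p(y,y)\}$ with both arguments equal to $x_{n-1}$ gives $a_n\le\max\{\alpha a_{n-1},a_{n-1}\}=a_{n-1}$, so $(a_n)$ decreases to some $a_*=a_*(x_0)\ge\rho$. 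To see that $\{x_n\}$ is $p^s$-Cauchy, observe first that $e_n:=p(x_0,x_n)$ is bounded: by (P4) with middle point $x_1$ and then the contraction, $e_n\le p(x_0,x_1)+\max\{\alpha e_{n-1},a_0\}\le\bigl(p(x_0,x_1)+a_0\bigr)+\alpha e_{n-1}$, whence $e_n\le\bigl(p(x_0,x_1)+a_0\bigr)/(1-\alpha)+\alpha^{n}a_0$. Consequently $D_N:=\sup\{p(x_n,x_m):n,m\ge N\}\le e_n+e_m<\infty$ by (P4), and the contraction together with the monotonicity of $(a_n)$ gives $D_N\le\max\{\alpha D_{N-1},a_{N-1}\}$; since $\alpha<1$ and $a_{N-1}\to a_*$ this forces $D_N\to a_*$, so $p^s(x_n,x_m)=2p(x_n,x_m)-a_n-a_m\le 2D_N-2a_*\to0$. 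By Lemma \ref{lem-Mat} the space $(X,p^s)$ is complete, so $x_n\to u=u(x_0)$ in $p^s$, and by (\ref{Mat}) $p(u,u)=\lim_{n,m}p(x_n,x_m)=a_*$. Finally, by (P4), $p(Tu,u)\le p(Tu,x_{n+1})+p(x_{n+1},u)-p(x_{n+1},x_{n+1})$, while $p(Tu,x_{n+1})=p(Tu,Tx_n)\le\max\{\alpha p(u,x_n),p(u,u),p(x_n,x_n)\}$; letting $n\to\infty$ and using (\ref{Mat}) gives $\limsup_n p(Tu,x_{n+1})\le a_*$ and hence $p(Tu,u)\le a_*$, whereas (P3) gives $p(Tu,u)\ge p(u,u)=a_*$. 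Thus $p(Tu,u)=p(u,u)=a_*$.

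For (1), since $a_*(x)\le p(x,x)$ for every $x$ and $a_*(x)\ge\rho$, we have $\inf_x a_*(x)=\rho$; choose $w_k\in X$ with $p(w_k,w_k)\to\rho$ and set $y_k=u(w_k)$, so that $p(y_k,y_k)=a_*(w_k)\to\rho$ and, by the last sentence of the previous paragraph, $p(Ty_k,y_k)=p(y_k,y_k)$. Applying (P4) with middle point $Ty_k$ and then with middle point $Ty_l$, using $p(Ty_k,y_k)=p(y_k,y_k)$ and $p(Ty_l,y_l)=p(y_l,y_l)$, and then the contraction to the pair $(y_k,y_l)$, we obtain
\[
p(y_k,y_l)\le\bigl(p(y_k,y_k)-p(Ty_k,Ty_k)\bigr)+\max\bigl\{\alpha p(y_k,y_l),\,p(y_k,y_k),\,p(y_l,y_l)\bigr\}+\bigl(p(y_l,y_l)-p(Ty_l,Ty_l)\bigr).
\]
Since $p(Ty_k,Ty_k),p(Ty_l,Ty_l)\ge\rho$, taking $\limsup_{k,l\to\infty}$ and using $p(y_k,y_k),p(y_l,y_l)\to\rho$ yields $M\le\max\{\alpha M,\rho\}$ for $M:=\limsup_{k,l}p(y_k,y_l)$; as $\alpha<1$ this forces $M\le\rho$, hence $p(y_k,y_l)\to\rho$ and so $p^s(y_k,y_l)=2p(y_k,y_l)-p(y_k,y_k)-p(y_l,y_l)\to0$. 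Thus $\{y_k\}$ is $p^s$-Cauchy; its limit $y$ satisfies $p(y,y)=\lim_{k,l}p(y_k,y_l)=\rho$ by (\ref{Mat}), so $y\in X_p$ and $X_p\neq\emptyset$.

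For (2) and (3): if $x\in X_p$ then the contraction with both arguments $x$ gives $p(Tx,Tx)\le\max\{\alpha\rho,\rho\}=\rho$, and $p(Tx,Tx)\ge\rho$, so $Tx\in X_p$; hence $T(X_p)\subseteq X_p$. On $X_p$ we have $p(x,x)=\rho$ and $p(x,y)\ge\rho$, so for $x,y\in X_p$,
\[
p^s(Tx,Ty)=2p(Tx,Ty)-2\rho\le 2\max\{\alpha p(x,y),\rho\}-2\rho\le\alpha\bigl(2p(x,y)-2\rho\bigr)=\alpha\,p^s(x,y),
\]
the last inequality because $\alpha\rho\le\rho$ and $\alpha\bigl(p(x,y)-\rho\bigr)\ge0$. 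Moreover $X_p$ is closed in $(X,p^s)$: if $z_n\in X_p$ with $p^s(z_n,z)\to0$, then $\{z_n\}$ is $p^s$-Cauchy and $p(z_n,z_n)=\rho$, so $p(z_n,z_m)\to\rho$, and (\ref{Mat}) gives $p(z,z)=\rho$. Hence $(X_p,p^s)$ is a complete metric space on which $T$ is an $\alpha$-contraction, and the Banach contraction principle yields a unique $u\in X_p$ with $Tu=u$ and $T^nx\to u$ in $p^s$ for every $x\in X_p$, which is (2) and (3). The crux of the argument is (1): completeness of $(X,p)$ by itself does not guarantee $X_p\neq\emptyset$, so the contractive hypothesis must genuinely be exploited, and the device for doing so is to replace an arbitrary minimizing sequence $w_k$ by the Picard limits $y_k=u(w_k)$, which carry the additional relation $p(Ty_k,y_k)=p(y_k,y_k)$ — precisely the ingredient that makes the double-triangle estimate close up. Once (1) is established, parts (2) and (3) are a routine transfer of Banach's theorem to the slice $X_p$.
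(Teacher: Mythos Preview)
Your proof is correct and follows essentially the same route as the paper, which obtains this theorem as the special case $\phi(t)=\alpha t$ of Theorem~\ref{mainthm}: bounded Picard orbits, the Cauchy estimate via iterated contraction, the key identity $p(Tz,z)=p(z,z)$ for the Picard limit $z$, and then the double-triangle estimate on the Picard limits $y_k=u(w_k)$ of a minimizing sequence to show $X_p\neq\emptyset$. The only notable difference is in parts (2)--(3): you observe that $T|_{X_p}$ is an $\alpha$-contraction for $p^s$ and invoke the classical Banach principle on the complete metric space $(X_p,p^s)$, whereas the paper argues directly from the already-established Picard convergence; your shortcut is clean and buys you (3) for free.
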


The proof of the following lemma can be easily achieved by using
the partial metric topology.

\begin{lemma} \emph{\cite{TEKcommon,Thweakly}} \label{continuity of pm}
Assume $x_n\rightarrow z$ as $n\rightarrow\infty$ in a PMS $(X,p)$
such that $p(z,z)=0$. Then $\lim_{n\rightarrow\infty}
p(x_n,y)=p(z,y)$ for every $y \in X$.
\end{lemma}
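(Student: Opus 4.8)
The plan is to squeeze $p(x_n,y)$ between two quantities, both of which converge to $p(z,y)$, using nothing more than the triangle inequality (P4) and the definition of convergence in a PMS. First I would record what the hypotheses give: by the definition of convergence in a PMS, $x_n\to z$ means $p(z,z)=\lim_{n\to\infty}p(z,x_n)$, so the extra assumption $p(z,z)=0$ yields $\lim_{n\to\infty}p(z,x_n)=0$, and by symmetry (P1) also $\lim_{n\to\infty}p(x_n,z)=0$.

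Next I would apply (P4) twice, in its two ``orientations''. Applying (P4) with the roles $x:=x_n$, $y:=z$, $z:=y$ gives $p(x_n,y)+p(z,z)\le p(x_n,z)+p(z,y)$; since $p(z,z)=0$, this is the upper estimate $p(x_n,y)-p(z,y)\le p(x_n,z)$. Applying (P4) with the roles $x:=z$, $y:=x_n$, $z:=y$ gives $p(z,y)+p(x_n,x_n)\le p(z,x_n)+p(x_n,y)$; since $p(x_n,x_n)\ge 0$ (indeed $p(x_n,x_n)\le p(x_n,z)$ by (P3)), discarding this term gives the lower estimate $p(z,y)-p(x_n,y)\le p(z,x_n)=p(x_n,z)$. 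Combining the two, $|p(x_n,y)-p(z,y)|\le p(x_n,z)$ for every $n$ and every $y\in X$.

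Finally, letting $n\to\infty$ and using $p(x_n,z)\to 0$ forces $\lim_{n\to\infty}p(x_n,y)=p(z,y)$, which is the assertion. There is essentially no obstacle here; the only points needing a little care are to invoke (P4) in both directions so as to bound $|p(x_n,y)-p(z,y)|$ on both sides, and to remember that $p(x_n,x_n)$ is nonnegative and can therefore be discarded from the second inequality without reversing it. Alternatively, one could argue through the associated metric $p^s$ via Lemma~\ref{lem-Mat}, noting that $p^s(z,x_n)\to 0$ and $p^s(x_n,y)\le p^s(x_n,z)+p^s(z,y)$, together with $p(x_n,x_n)\to 0$; but the direct computation above is shorter.
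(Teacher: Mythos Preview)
Your argument is correct: the two applications of (P4) give $|p(x_n,y)-p(z,y)|\le p(x_n,z)\to 0$, which is exactly what is needed. The paper itself does not supply a proof of this lemma---it merely remarks that it follows easily from the partial metric topology and cites \cite{TEKcommon,Thweakly}---so your direct squeeze via the triangularity axiom is a perfectly standard and complete justification.
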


The following Lemma summarizes the relation between certain comparison functions that usually act as control functions in the studied contractive typed mappings in fixed point theory. For such a summary and fixed point theory for $\phi-$ contractive mappings we advice for \cite{A Rus}.
\begin{lemma}
Let $\phi: \mathbb{R}^+\rightarrow \mathbb{R}^+$ be a function and relative to the function $\phi$ consider the following conditions:

\begin{itemize}
  \item (i) $\phi$ is monotone increasing.
  \item (ii) $\phi(t)< t$ for all $t>0$.
  \item (iii) $\phi(0)=0$.
  \item (iv) $\phi$ is right uppersemicontinuous.
  \item (v) $\phi$ is right continuous.
  \item (vi) $\lim_{n\rightarrow \infty} \phi^n(t)=0$ for all $t\geq 0$.
\end{itemize}
Then the following are valid:
\begin{itemize}
  \item (1) The conditions (i) and (ii) imply (iii).
  \item (2) The conditions (ii) and (v) imply (iii).
  \item (3) The conditions (i) and (vi) imply (ii).
  \item (4) The conditions (i) and (iv) imply imply (vi).
  \item (5) If $\phi$ satisfies (i) then (iv) $\Leftrightarrow$ (v).
\end{itemize}

\end{lemma}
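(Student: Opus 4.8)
The plan is to check the five implications separately; parts (1)--(3) are short arguments from monotonicity, while (4)--(5) require a little semicontinuity bookkeeping. For (1), fix any $t>0$; monotonicity together with (ii) gives $0\le\phi(0)\le\phi(t)<t$, and letting $t\downarrow 0$ forces $\phi(0)=0$. For (2), the bound $0\le\phi(t)<t$ for $t>0$ gives $\lim_{t\to 0^{+}}\phi(t)=0$ by squeezing, and right continuity (v) identifies this limit with $\phi(0)$. For (3), I would argue by contradiction: if $\phi(t_0)\ge t_0$ for some $t_0>0$, then an induction using monotonicity (i) yields $\phi^{n}(t_0)\ge\phi^{n-1}(t_0)\ge\cdots\ge t_0>0$ for all $n$, contradicting (vi); hence $\phi(t)<t$ for every $t>0$.

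For (4), fix $t>0$ and consider the orbit $x_n=\phi^{n}(t)$ (the case $t=0$ is trivial once $\phi(0)=0$, which holds here by part (1)). Since $\phi(s)<s$ for $s>0$ and $\phi(0)=0$, each step satisfies $x_{n+1}=\phi(x_n)\le x_n$, so $\{x_n\}$ is non-increasing and bounded below, hence $x_n$ decreases to some $L\ge 0$; the goal is $L=0$. Suppose $L>0$. First, $x_n>L$ for every $n$, because $x_N=L$ would force $x_{N+1}=\phi(L)<L$, contradicting $x_{N+1}\ge L$. Thus $x_n$ decreases to $L$ strictly from above, and since $\phi$ is monotone increasing, $\phi(x_n)$ decreases to $\inf_{s>L}\phi(s)=\lim_{s\to L^{+}}\phi(s)$; but $\phi(x_n)=x_{n+1}\to L$, so $\lim_{s\to L^{+}}\phi(s)=L$. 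Right upper semicontinuity (iv) at $L$ then gives $L=\limsup_{s\to L^{+}}\phi(s)\le\phi(L)$, contradicting (ii). Hence $L=0$, that is $\phi^{n}(t)\to 0$.

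For (5), the direction (v) $\Rightarrow$ (iv) is immediate and needs no hypothesis, since $\lim_{s\to a^{+}}\phi(s)=\phi(a)$ forces $\limsup_{s\to a^{+}}\phi(s)\le\phi(a)$. For (iv) $\Rightarrow$ (v) I would use (i): monotonicity gives $\phi(s)\ge\phi(a)$ for $s>a$, hence the limit $\lim_{s\to a^{+}}\phi(s)=\inf_{s>a}\phi(s)$ exists and is $\ge\phi(a)$; combining this with $\limsup_{s\to a^{+}}\phi(s)\le\phi(a)$ from (iv) gives $\lim_{s\to a^{+}}\phi(s)=\phi(a)$, i.e.\ right continuity at $a$. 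I expect part (4) to be the main obstacle: the delicate step is ruling out a strictly positive limit $L$ of the orbit, which is exactly where right upper semicontinuity enters, and where (together with (i)) condition (ii) is genuinely used --- for a monotone right u.s.c.\ map whose graph meets the diagonal, such as the identity, conclusion (vi) simply fails.
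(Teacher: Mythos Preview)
The paper does not supply its own proof of this lemma; it is stated as a summary with a pointer to Rus \cite{A Rus}, so there is nothing to compare your approach against. Your arguments for (1), (2), (3), and (5) are correct and are the standard ones.

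For (4) you have in fact done more than prove the stated implication: you have correctly observed that it is \emph{false} as written. The identity $\phi(t)=t$ is monotone increasing and right upper semicontinuous, yet $\phi^{n}(t)=t\not\to 0$ for $t>0$, so (i) and (iv) alone cannot imply (vi). Your proof uses (ii) throughout---to get $\phi(0)=0$ via part (1), to make the orbit $x_{n}=\phi^{n}(t)$ non-increasing, and to derive the final contradiction $L\le\phi(L)<L$---and this is unavoidable. What you have actually (and correctly) proved is that (i), (ii) and (iv) together imply (vi). The doubled ``imply imply'' in the paper's item (4) strongly suggests that a hypothesis, almost certainly (ii), was lost in editing; your remark at the end about maps whose graph meets the diagonal makes the point precisely.

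One small simplification in your (4): once you know $x_{n}\downarrow L$ with $x_{n}\ge L$, right upper semicontinuity at $L$ already gives $L=\lim x_{n+1}=\lim\phi(x_{n})\le\limsup_{s\to L^{+}}\phi(s)\le\phi(L)$ directly, without first identifying $\lim_{s\to L^{+}}\phi(s)$ via monotonicity. The monotonicity hypothesis (i) is then used only to secure $\phi(0)=0$ through part (1).
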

\section{Main Results } \label{s:1}

\begin{theorem} \label{mainthm}
Let $(X,p)$ be a complete partial metric space. Suppose
$T: X \rightarrow X$ is a given mapping satisfying:
\begin{eqnarray}
p(Tx,Ty)\leq \max\{\phi(p(x,y)),p(x,x),p(y,y)\},\label{phicontr}
\end{eqnarray}
where $\phi: [0,\infty)\rightarrow [0,\infty)$ is an increasing function
 such that $f(t)=t-\phi(t)$ is increasing with $f^{-1}$ is right continuous at $0$ . Also assume $\lim_{n\rightarrow \infty}\phi^n(t)=0$ for all $t\geq0$ (and hence $\phi(0)=0, \phi(t)<t$ for $t>0$ ). Then:
\begin{itemize}
\item[(1)] the set $X_p$ is nonempty;
\item[(2)] there is a unique $u \in X_p$ such that $Tu=u$;
\item[(3)] for each $x \in X_p$ the sequence $\{T^nx\}_{n\geq1}$ converges with respect to the metric
$p^s$ to $u$.
\end{itemize}
\end{theorem}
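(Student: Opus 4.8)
The plan is to follow the scheme of Theorem~\ref{Rako}, with the geometric factor $\alpha$ replaced by $\phi$, using the monotonicity of $\phi$ and of $f(t)=t-\phi(t)$ to compensate for the loss of the estimate ``$\alpha t$''. Fix $x_0\in X$ and put $x_n=T^nx_0$, $a_n=p(x_n,x_n)$, $d_n=p(x_n,x_{n+1})$. Taking $y=x$ in (\ref{phicontr}) and using $\phi(t)\le t$ gives $p(Tx,Tx)\le p(x,x)$, so $\{a_n\}$ is non-increasing, say $a_n\downarrow a\ge\rho_p$. From (\ref{phicontr}), $d_n\le\max\{\phi(d_{n-1}),a_{n-1}\}$. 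If for some $N$ one had $d_m>a+\varepsilon$ for all $m\ge N$, then $a_{m-1}<a+\varepsilon<d_m$ would force the maximum to be $\phi(d_{m-1})$, hence $d_{N+k}\le\phi^{k}(d_N)\to0$, a contradiction; thus the sequence eventually drops to $\le a+\varepsilon$ and, since $\phi(a+\varepsilon)\le a+\varepsilon$, stays there. With $d_n\ge a_n$ this gives $d_n\to a$, and therefore $p^s(x_n,x_{n+1})=2d_n-a_n-a_{n+1}\to0$.

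The heart of the proof is to upgrade this to: $\{x_n\}$ is Cauchy in the metric $p^s$. One cannot merely sum $p^s(x_n,x_{n+1})$ along the orbit, since these terms need not be summable when $a>0$ (the self-distances do not vanish); instead I would argue by contradiction in the Matkowski style. Suppose $\{x_n\}$ is not $p^s$-Cauchy; fix $\varepsilon_0>0$ and, for each $k$, choose $m_k<n_k$ with $p^s(x_{m_k},x_{n_k})\ge\varepsilon_0$ and $n_k$ least with this property. Then $p^s(x_{m_k},x_{n_k})\to\varepsilon_0$, and the triangle inequality for $p^s$ together with $p^s(x_n,x_{n+1})\to0$ gives $p^s(x_{m_k+1},x_{n_k+1})\to\varepsilon_0$. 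Passing through $p^s(x,y)=2p(x,y)-p(x,x)-p(y,y)$ and $a_n\to a$, both $p(x_{m_k},x_{n_k})$ and $p(x_{m_k+1},x_{n_k+1})$ tend to $\tau:=a+\varepsilon_0/2>a$. Then (\ref{phicontr}) yields $\tau\le\max\{\limsup_k\phi(p(x_{m_k},x_{n_k})),\,a\}$. Here the hypothesis that $t\mapsto t-\phi(t)$ is increasing is used: it forces $\lim_{s\to\tau^+}(s-\phi(s))\ge\tau-\phi(\tau)$, that is $\lim_{s\to\tau^+}\phi(s)\le\phi(\tau)$, and with $\phi$ increasing this gives $\limsup_k\phi(p(x_{m_k},x_{n_k}))\le\phi(\tau)$; since $\phi(\tau)<\tau$ and $a<\tau$ we obtain $\tau<\tau$, a contradiction. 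Hence $\{x_n\}$ is $p^s$-Cauchy, and by Lemma~\ref{lem-Mat} it $p^s$-converges to some $u(x_0)\in X$ with $p(u(x_0),u(x_0))=\lim_{n,m}p(x_n,x_m)=a$. I expect this Cauchy step to be the main obstacle; the rest is comparatively routine.

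For (1): by (P3) one has $\rho_p=\inf_{x\in X}p(x,x)$, so pick $z_k\in X$ with $p(z_k,z_k)\to\rho_p$ and set $w_k=u(z_k)$, whence $\rho_p\le\alpha_k:=p(w_k,w_k)\le p(z_k,z_k)$ and $\alpha_k\to\rho_p$. Comparing the orbits of $z_k$ and $z_j$ with $g_n=p(T^nz_k,T^nz_j)$, (\ref{phicontr}) gives $g_n\le\max\{\phi(g_{n-1}),p(T^{n-1}z_k,T^{n-1}z_k),p(T^{n-1}z_j,T^{n-1}z_j)\}$, and the same drop-then-stay argument as above (with $\max\{\alpha_k,\alpha_j\}+\varepsilon$ in place of $a+\varepsilon$) gives $\limsup_n g_n\le\max\{\alpha_k,\alpha_j\}$. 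Since $T^nz_k\to w_k$ and $T^nz_j\to w_j$ in $p^s$, this yields $p^s(w_k,w_j)\le\liminf_n p^s(T^nz_k,T^nz_j)\le2\limsup_n g_n-\alpha_k-\alpha_j\le|\alpha_k-\alpha_j|$. Hence $\{w_k\}$ is $p^s$-Cauchy, converges in $p^s$ to some $w$, and $p(w,w)=\lim_{k,j}p(w_k,w_j)=\lim_k\alpha_k=\rho_p$; thus $w\in X_p$ and $X_p\neq\emptyset$.

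For (2) and (3): pick $u_0\in X_p$. Then $a_0=\rho_p$ forces $a=\rho_p$ along the orbit of $u_0$, so $T^nu_0\to u$ in $p^s$ with $p(u,u)=\rho_p$. To see $Tu=u$ I would use (P4): $p(u,Tu)\le p(u,x_{n+1})+p(x_{n+1},Tu)-a_{n+1}$, where $p(x_{n+1},Tu)=p(Tx_n,Tu)\le\max\{\phi(p(x_n,u)),a_n,\rho_p\}$; since $p(x_n,u)\to\rho_p$ by Lemma~\ref{lem-Mat} and $\phi(t)\le t$, letting $n\to\infty$ gives $p(u,Tu)\le\rho_p$, while $p(u,Tu)\ge p(u,u)=\rho_p$ and $\rho_p\le p(Tu,Tu)\le p(u,u)=\rho_p$; by (P2), $Tu=u$ and $u\in X_p$. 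For uniqueness, if $u,u'\in X_p$ are fixed points then (\ref{phicontr}) gives $p(u,u')\le\max\{\phi(p(u,u')),\rho_p\}$; if $p(u,u')>\rho_p$ then $\phi(p(u,u'))<p(u,u')$ and $\rho_p<p(u,u')$, a contradiction, so $p(u,u')=\rho_p=p(u,u)=p(u',u')$ and $u=u'$ by (P2). Finally, the argument just given shows that for every $x\in X_p$ the orbit $\{T^nx\}$ $p^s$-converges to a fixed point lying in $X_p$, which by uniqueness is $u$; this establishes (2) and (3).
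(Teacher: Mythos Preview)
Your argument is correct, and it follows a genuinely different route from the paper's proof. The paper first obtains an explicit a~priori bound $p(T^{n}x,x)\le M_x:=f^{-1}(p(x,Tx))+p(x,x)$ by induction, and then proves the Cauchy property directly by iterating $\phi$ against $2M_x$; you instead run a Matkowski-type contradiction on the associated metric $p^{s}$, extracting subsequences with $p(x_{m_k},x_{n_k})\to\tau>a$ and using that $\phi$ increasing together with $t\mapsto t-\phi(t)$ increasing makes $\phi$ upper semicontinuous, so that $\tau\le\max\{\phi(\tau),a\}<\tau$. For the nonemptiness of $X_p$ the contrast is sharper: the paper picks $x_k$ with $p(x_k,x_k)<\rho_p+1/k$, passes to the orbit limits $z_k$, and controls $p(z_n,z_m)$ through explicit $f^{-1}$-estimates, invoking the right continuity of $f^{-1}$ at $0$; you compare the two orbits directly, obtain $\limsup_n p(T^{n}z_k,T^{n}z_j)\le\max\{\alpha_k,\alpha_j\}$ by the same drop-then-stay mechanism, and deduce the clean identity $p^{s}(w_k,w_j)=|\alpha_k-\alpha_j|$, which makes $\{w_k\}$ Cauchy at once.

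The payoff of your approach is that it never calls on $f^{-1}$ at all: only the monotonicity of $\phi$ and of $t-\phi(t)$ (plus $\phi^{n}(t)\to0$) are used, so in effect you prove the theorem under a strictly weaker hypothesis than stated. The paper's route, on the other hand, gives quantitative control (the uniform orbit bound $M_x$) and makes transparent where $f^{-1}$ enters. Two small presentational points you may want to tighten: in the drop-then-stay step, first pass to a tail where $a_{m-1}<a+\varepsilon$ before running the dichotomy, so that the ``stays there'' half is immediate; and in the $X_p$ step, since $g_n\ge\max\{\alpha_k,\alpha_j\}$ by (P3), you actually get $g_n\to\max\{\alpha_k,\alpha_j\}$ and hence $p^{s}(w_k,w_j)=|\alpha_k-\alpha_j|$ as an equality, which is slightly stronger than the inequality you wrote.
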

\begin{proof}
Let $x\in X$. Then $p(Tx,Tx)\leq p(x,x)$ and therefore $\{p(T^nx,T^nx)\}_{n\geq0}$ is a nonincreasing sequence.
Now Define $$M_x:=f^{-1}(p(x,Tx))+p(x,x),$$ where $f(t)=t-\phi(t)$. Notice that
$f(0)=0$ (and hence  $f^{-1}(0)=0$), $f(t)<t$ for $t>0$ and hence $f^{-1}(t)>t$ for $t>0$. Now we prove that
\begin{equation}\label{Mx}
p(T^nx,x)\leq M_x,\,\,\, \forall n\geq0.
\end{equation}
Inequality (\ref{Mx}) is true for $n=0, 1$ since: $p(x,x)\leq M_x$ and $p(Tx,x)\leq f^{-1}(p(Tx,x))\leq M_x$. Now we proceed by induction. Suppose that (\ref{Mx}) is true for each $n\leq n_0-1$ for some positive integer $n_0\geq2$. Then
\begin{eqnarray*}
p(T^{n_0}x,x)&\leq& p(T^{n_0}x,Tx)+p(Tx,x)\\
&\leq&\max\{\phi(p(T^{n_0-1}x,x)),p(T^{n_0-1}x,T^{n_0-1}x),p(x,x)\}+p(Tx,x)\\
&\leq&\max\{\phi(p(T^{n_0-1}x,x))),p(x,x)\}+p(Tx,x)\\
\end{eqnarray*}
\begin{itemize}
\item[]
\begin{itemize}
\item[Case 1:]
\begin{eqnarray*}
p(T^{n_0}x,x)&\leq& \phi(p(T^{n_0-1}x,Tx))+p(Tx,x)\\
&\leq& \phi(f^{-1}(p(Tx,x))+p(x,x))+p(Tx,x)\\
&=& f^{-1}(p(Tx,x))+p(x,x)-f(f^{-1}(p(Tx,x))+p(x,x))+p(Tx,x)\\
&\leq& M_x-f(f^{-1}(p(Tx,x)))+p(Tx,x)=M_x.
\end{eqnarray*}
\item[Case 2:]
\begin{eqnarray*}
p(T^{n_0}x,x)&\leq& p(x,x)+p(Tx,x)\\
&\leq&p(x,x)+f^{-1}(p(Tx,x))=M_x.
\end{eqnarray*}
\end{itemize}
\end{itemize}
Thus, we obtain (\ref{Mx}). Next we prove that the sequence $\{p(T^nx,T^nx)\}_{n\geq0}$ is Cauchy. Equivalently,
we show that
\begin{eqnarray}\label{cauchy}
\lim_{n,m\rightarrow\infty}p(T^nx,T^mx)=r_x
\end{eqnarray}
where $r_x:=\inf_{n}p(T^nx,T^nx)$. Now clearly
$r_x\leq p(T^nx,T^nx)\leq p(T^nx,T^mx)$ for all $n,m$. Also, given any $\epsilon>0$, there exists $n_0\in\mathbb{N}$ such that
$p(T^{n_0}x,T^{n_0}x)<r_x+\epsilon$ and $\phi^{n_0}(2M_x)<r_x+\epsilon$. Therefore, for any $m,n>2n_0$ we have
\begin{eqnarray*}
r_x&\leq& p(T^nx,T^mx)\\
&\leq&\max\{\phi(p(T^{n-1}x,T^{m-1}x)),p(T^{n-1}x,T^{n-1}x),p(T^{m-1}x,T^{m-1}x)\}\\
&\leq&\max\{\phi^2(p(T^{n-2}x,T^{m-2}x)),p(T^{n-2}x,T^{n-2}x),p(T^{m-2}x,T^{m-2}x)\}\\
&\leq&\max\{\phi^{n_0}(p(T^{n-n_0}x,T^{m-n_0}x)),p(T^{n-n_0}x,T^{n-n_0}x),p(T^{m-n_0}x,T^{m-n_0}x)\}\\
&\leq&\max\{\phi^{n_0}(p(T^{n-n_0}x,x)+p(T^{m-n_0}x,x)),p(T^{n-n_0}x,T^{n-n_0}x),p(T^{m-n_0}x,T^{m-n_0}x)\}\\
&<&\max\{\phi^{n_0}(2M_x),r_x+\epsilon,r_x+\epsilon\}\\
&<&r_x+\epsilon.
\end{eqnarray*}
Hence, we obtain (\ref{cauchy}). Since $(X,p)$ is a complete partial metric space, there exists $z\in X$ such that
$p(z,z)=r_x$. Next, we show that $p(z,z)=p(Tz,z)$. For every $n\in\mathbb{N}$ we have
\begin{eqnarray*}
p(z,z)&\leq& p(Tz,z)\\
&\leq& p(Tz,T^nx)+p(T^nx,z)-p(T^nx,T^nx)\\
&\leq&\max\{\phi(p(z,T^{n-1}x)),p(T^{n-1}x,T^{n-1}x),p(z,z)\}+p(T^nx,z)-p(T^nx,T^nx).
\end{eqnarray*}
\begin{itemize}
\item[]
\begin{itemize}
\item[Case 1:]
\begin{eqnarray*}
p(Tz,z)&\leq&\phi(p(z,T^{n-1}x))+p(T^nx,z)-p(T^nx,T^nx)\\
&\leq&p(z,T^{n-1}x)+p(T^nx,z)-p(T^nx,T^nx)\rightarrow p(z,z) \mbox{  as  } n\rightarrow\infty
\end{eqnarray*}
\item[Case 2:] $p(Tz,z)\leq p(T^{n-1}x,T^{n-1}x)+p(T^nx,z)-p(T^nx,T^nx)\rightarrow p(z,z) \mbox{  as  } n\rightarrow\infty$
\item[Case 3:] $p(Tz,z)\leq p(z,z)+p(T^nx,z)-p(T^nx,T^nx)\rightarrow p(z,z) \mbox{  as  } n\rightarrow\infty.$
\end{itemize}
\end{itemize}
Therefore,
\begin{equation}\label{eq}
p(z,z)=p(Tz,z)
\end{equation}
Now we show that $X_p$ (see Definition \ref{min radius}) is nonempty. For each $k\in \mathbb{N}$ choose $x_k\in X$ with
$p(x_k,x_k)<\rho_p+1/k$, where $x_k=T^kx$. First, we prove that
\begin{equation}\label{limz}
\lim_{m,n\rightarrow\infty} p(z_n,z_m)=\rho_p.
\end{equation}
Given $\epsilon>0$, take $n_0:=[f^{-1}(3/\epsilon)]+1$. If $k>n_0$, then
\begin{eqnarray*}
\rho_p&\leq& p(Tz_k,Tz_k)\leq p(z_k,z_k)=r_{x_k}\leq p(x_k,x_k)<\rho_p+1/k\\
&<&\rho_p+1/n_0<\rho_p+1/f^{-1}(3/\epsilon).
\end{eqnarray*}
Set $U_k:=p(z_k,z_k)-p(Tz_k,Tz_k)$. Then $U_k<1/f^{-1}(3/\epsilon)$ for $k>n_0$.
Thus, if $m,n>n_0$ then by (\ref{eq}) and the fact that $f$ (and hence $f^{-1}$) is increasing, we have
\begin{eqnarray*}
p(z_n,z_m)&\leq&p(z_n,Tz_n)+p(Tz_n,Tz_m)+p(Tz_m,z_m)-p(Tz_n,Tz_n)-p(Tz_m,Tz_m)\\
&=&U_n+U_m+p(Tz_n,Tz_m)\\
&<&2/f^{-1}(3/\epsilon)+\max\{\phi(p(z_n,z_m)),p(z_n,z_n),p(z_m,z_m)\}\\
&\leq&\max\{f^{-1}\left(2/f^{-1}(3/\epsilon)\right),3/f^{-1}(3/\epsilon)+\rho_p\}\\
&\leq&\max\{f^{-1}\left(2\epsilon/3)\right),\rho_p+\epsilon\}\\
&\leq&\rho_p+\epsilon+f^{-1}(2\epsilon/3).
\end{eqnarray*}
Therefore, if we let $\epsilon\rightarrow 0$ we get (\ref{limz}). Since $(X,p)$ is a complete partial metric space, there exists $u\in X$
such that $p(u,u)=\lim_{m,n\rightarrow\infty} p(z_n,z_m)=\rho_p$. Consequently, $u\in X_p$ and hence $X_p$ is nonempty.

Now choose an arbitrary $x\in X_p$. Then
$$\rho_p\leq p(Tz,Tz)\leq p(Tz,z)=p(z,z)=r_x=\rho_p,$$
which, using P2, implies that $Tz=z$. To prove uniqueness of the fixed point we suppose that
$u,v\in X_p$ are both fixed points of T. Then
\begin{eqnarray*}
\rho_p\leq p(u,v)=p(Tu,Tv)&\leq&\max\{\phi(p(u,v)),p(u,u),p(v,v)\}\\
&\leq&\max\{\phi(p(u,v)),\rho_p\}.
\end{eqnarray*}
\begin{itemize}
\item[]
\begin{itemize}
\item[Case 1:] $\rho_p\leq p(u,v)\leq\rho_p\Rightarrow p(u,v)=\rho_p=p(u,u)=p(v,v)\Rightarrow u=v$.
\item[Case 2:]
\begin{eqnarray*}
&&p(u,v)\leq \phi(p(u,v))\\
\Rightarrow &&p(u,v)-\phi(p(u,v))\leq0\\
\Rightarrow &&f(p(u,v))\leq0\\
\Rightarrow &&f(p(u,v))=0\\
\Rightarrow &&p(u,v)=0\\
\Rightarrow&&u=v
\end{eqnarray*}
\end{itemize}
\end{itemize}
Thus, the fixed point is unique.
\end{proof}
Clearly, the above theorem does not guarantee uniqueness of the fixed point in $X$. However, if (\ref{phicontr}) is
replaced by the condition below, we can show uniqueness.
\begin{theorem}\label{mainthm2}
Let $(X,p)$ be a complete partial metric space. Suppose
$T: X \rightarrow X$ is a given mapping satisfying:
\begin{eqnarray}
p(Tx,Ty)\leq \max\left\{\phi(p(x,y)),\frac{p(x,x)+p(y,y)}{2}\right\},\label{sphicontr}
\end{eqnarray}
where $\phi: [0,\infty)\rightarrow [0,\infty)$ is as in Theorem \ref{mainthm}. Then there is a unique point
$z\in X$ such that $Tz=z$. Furthermore, $z\in X_p$ and for each $x\in X_p$ the sequence $\{T^nx\}_{n\geq1}$ converges
with respect to the metric $p^s$ to $z$.
\end{theorem}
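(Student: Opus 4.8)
The plan is to reduce Theorem \ref{mainthm2} to Theorem \ref{mainthm} and then to upgrade uniqueness from $X_p$ to all of $X$. The key first observation is that condition (\ref{sphicontr}) is stronger than (\ref{phicontr}): since $\frac{p(x,x)+p(y,y)}{2}\le\max\{p(x,x),p(y,y)\}$, we have
$$\max\left\{\phi(p(x,y)),\tfrac{p(x,x)+p(y,y)}{2}\right\}\le\max\{\phi(p(x,y)),p(x,x),p(y,y)\},$$
so any $T$ satisfying (\ref{sphicontr}) also satisfies (\ref{phicontr}). Hence Theorem \ref{mainthm} applies verbatim and already yields that $X_p\neq\emptyset$, that there is a point $z\in X_p$ with $Tz=z$ which is the unique fixed point in $X_p$, and that for every $x\in X_p$ the orbit $\{T^nx\}_{n\ge1}$ converges to $z$ with respect to $p^s$. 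Thus the only genuinely new assertion to prove is that $z$ is the unique fixed point of $T$ in the whole space $X$.

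For this I would take two arbitrary fixed points $u,v\in X$ of $T$ (such a point exists by the previous paragraph, namely $z$) and substitute them into (\ref{sphicontr}):
$$\rho_p\le p(u,v)=p(Tu,Tv)\le\max\left\{\phi(p(u,v)),\tfrac{p(u,u)+p(v,v)}{2}\right\},$$
then split into two cases according to which term realizes the maximum, exactly as in the uniqueness argument at the end of the proof of Theorem \ref{mainthm}. In the case $p(u,v)\le\phi(p(u,v))$, i.e. $f(p(u,v))\le0$, the facts that $f(t)=t-\phi(t)$ is increasing and invertible with $f(0)=0$ (so $f\ge0$ on $[0,\infty)$ and $f(t)>0$ for $t>0$) force $f(p(u,v))=0$, hence $p(u,v)=0$; then (P3) gives $p(u,u)=p(v,v)=0$ and (P2) yields $u=v$. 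In the case $p(u,v)\le\frac{p(u,u)+p(v,v)}{2}$, the small-self-distance axiom (P3) gives $p(u,u)\le p(u,v)$ and $p(v,v)\le p(u,v)$, whence $\frac{p(u,u)+p(v,v)}{2}\le p(u,v)$; combining the two inequalities forces $p(u,v)=\frac{p(u,u)+p(v,v)}{2}$, and since both summands are $\le p(u,v)$ while their average equals $p(u,v)$, each must equal $p(u,v)$, so $p(u,u)=p(u,v)=p(v,v)$ and (P2) again gives $u=v$.

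Combining, $z$ is the unique fixed point of $T$ in $X$, it lies in $X_p$, and the $p^s$-convergence of $\{T^nx\}_{n\ge1}$ to $z$ for $x\in X_p$ is inherited directly from conclusion (3) of Theorem \ref{mainthm} once one identifies the fixed point of Theorem \ref{mainthm} (which lies in $X_p$) with $z$. I do not expect a serious obstacle; the only point demanding care is the second case of the uniqueness step, where axiom (P3) must be invoked twice to squeeze the chain of inequalities into equalities before applying (P2), and the verification that $f>0$ on $(0,\infty)$, which relies on $f$ being invertible (hence strictly increasing) rather than merely nondecreasing.
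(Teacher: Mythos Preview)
Your proof is correct and follows essentially the same route as the paper: reduce to Theorem \ref{mainthm} via the inequality $\tfrac{p(x,x)+p(y,y)}{2}\le\max\{p(x,x),p(y,y)\}$, then handle global uniqueness by the same two-case split on (\ref{sphicontr}). The only cosmetic difference is in Case~2, where the paper rewrites $p(u,v)\le\tfrac{p(u,u)+p(v,v)}{2}$ as $p^s(u,v)\le 0$ and concludes $u=v$ directly from $p^s$ being a metric, whereas you squeeze via (P3) and (P2); the two arguments are equivalent.
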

\begin{proof}
Using Theorem \ref{mainthm} we only need to prove uniqueness. Suppose there exists $u,v\in X$ such that
$Tu=u$ and $Tv=v$. Now $$p(u,v)=p(Tu,Tv)\leq\max\left\{\phi(p(u,v)),\frac{p(u,u)+p(v,v)}{2}\right\}.$$
\begin{itemize}
\item[]
\begin{itemize}
\item[Case 1:]
\begin{eqnarray*}
&&p(u,v)\leq \phi(p(u,v))\\
\Rightarrow &&p(u,v)-\phi(p(u,v))\leq0\\
\Rightarrow &&f(p(u,v))\leq0\\
\Rightarrow &&f(p(u,v))=0\\
\Rightarrow &&p(u,v)=0\\
\Rightarrow&&u=v
\end{eqnarray*}
\item[Case 2:]
\begin{eqnarray*}
&&p(u,v)\leq\frac{p(u,u)+p(v,v)}{2}\\
\Rightarrow &&2p(u,v)-p(u,u)-p(v,v)\leq0\\
\Rightarrow &&p^s(u,v)=0\\
\Rightarrow&&u=v
\end{eqnarray*}
\end{itemize}
\end{itemize}
\end{proof}

\begin{corollary}
Let $(X,p)$ be a 0-complete partial metric space. Suppose
$T: X \rightarrow X$ is a given mapping satisfying:
\begin{eqnarray}
p(Tx,Ty)\leq \phi(p(x,y)),\label{0-phicontr}
\end{eqnarray}
where $\phi: [0,\infty)\rightarrow [0,\infty)$ is an increasing function
 such that $f(t)=t-\phi(t)$ is increasing with $f^{-1}$ is right continuous at $0$ . Also assume $\lim_{n\rightarrow \infty}\phi^n(t)=0$ for all $t\geq0$ (and hence $\phi(0)=0, \phi(t)<t$ for $t>0$ ). Then there is a unique $z \in X$ such that $Tz=z$. Also $p(z,z)=0$ and for each $x \in X$ the sequence $\{T^nx\}$ converges with respect to the metric $p^s$  to $z$.
\end{corollary}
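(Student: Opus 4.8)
The plan is to build on the machinery already established for Theorem~\ref{mainthm} rather than redo the estimates from scratch. First observe that condition (\ref{0-phicontr}) is stronger than (\ref{phicontr}), since $\phi(p(x,y))\le\max\{\phi(p(x,y)),p(x,x),p(y,y)\}$. Hence, for each fixed $x\in X$, every consequence of (\ref{phicontr}) drawn inside the proof of Theorem~\ref{mainthm} is at our disposal: the bound (\ref{Mx}), the fact that $\{T^nx\}$ is a Cauchy sequence with $\lim_{n,m\to\infty}p(T^nx,T^mx)=r_x:=\inf_n p(T^nx,T^nx)$, and, for any point $z$ to which $\{T^nx\}$ converges in $\tau_p$ and which satisfies $p(z,z)=r_x$, the identity (\ref{eq}), that is, $p(Tz,z)=p(z,z)$. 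What the sharper hypothesis (\ref{0-phicontr}) adds is that $r_x=0$: from $p(T^{n+1}x,T^{n+1}x)=p(T(T^nx),T(T^nx))\le\phi(p(T^nx,T^nx))$ we get, by induction, $p(T^nx,T^nx)\le\phi^n(p(x,x))\to0$, so $r_x=0$.

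Now $\lim_{n,m\to\infty}p(T^nx,T^mx)=r_x=0$, so $\{T^nx\}$ is $0$-Cauchy, and $0$-completeness of $(X,p)$ yields a point $z=z_x\in X$ with $T^nx\to z$ in $\tau_p$ and $p(z,z)=0$; by Lemma~\ref{lem-Mat}(a2) this is equivalent to $p^s(T^nx,z)\to0$. Since $p(z,z)=0=r_x$, identity (\ref{eq}) gives $p(Tz,z)=p(z,z)=0$, and then (P3) forces $p(Tz,Tz)\le p(Tz,z)=0$. Thus $p(Tz,Tz)=p(Tz,z)=p(z,z)=0$, so (P2) gives $Tz=z$; in particular $T$ has a fixed point $z$ with $p(z,z)=0$ (and trivially $z\in X_p$).

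For uniqueness, if $Tu=u$ and $Tv=v$ then $p(u,u)=p(Tu,Tu)\le\phi(p(u,u))$ forces $p(u,u)=0$ because $\phi(t)<t$ for $t>0$; likewise $p(v,v)=0$, and $p(u,v)=p(Tu,Tv)\le\phi(p(u,v))$ forces $p(u,v)=0$, so (P2) gives $u=v$. Denote this unique fixed point by $z$. Finally, for an arbitrary $x\in X$ the point $z_x$ produced above is a fixed point of $T$, hence $z_x=z$, and therefore $p^s(T^nx,z)\to0$, that is, $\{T^nx\}$ converges to $z$ with respect to the metric $p^s$. The only thing that genuinely needs checking---and the step I would verify most carefully before writing out the details---is that the parts of the proof of Theorem~\ref{mainthm} being reused (the induction yielding (\ref{Mx}), the Cauchy estimate, and the three-case computation yielding (\ref{eq})) invoke only (\ref{phicontr}) together with the elementary facts $f^{-1}(t)\ge t$, $\phi(t)\le t$, and $\phi^n(t)\to0$, and never the completeness of $(X,p)$ or the positivity of $r_x$; once this is granted, those arguments transfer verbatim to the $0$-complete setting and no new estimates are required.
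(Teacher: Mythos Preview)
Your proof is correct. The paper states this corollary without proof, and your argument---observing that (\ref{0-phicontr}) implies (\ref{phicontr}) while additionally forcing $r_x=0$, so that the Cauchy estimates from the proof of Theorem~\ref{mainthm} yield a $0$-Cauchy sequence and $0$-completeness suffices to produce the limit $z$ with $p(z,z)=0$---is exactly the intended derivation.
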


\begin{corollary}
If in Theorem \ref{mainthm} and Theorem \ref{mainthm2} the function $\phi(t)=\alpha t$, $\alpha \in (0,1]$, then Theorem \ref{Rako} and Theorem 3.2 in \cite{Rako} will follow.
\end{corollary}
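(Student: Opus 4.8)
The plan is simply to specialize Theorem \ref{mainthm} and Theorem \ref{mainthm2} to the linear control function $\phi(t)=\alpha t$ and to check that all the standing hypotheses on $\phi$ are satisfied, after which the three conclusions transcribe almost verbatim. First I would record, for $\alpha\in[0,1)$, the routine verifications: $\phi(t)=\alpha t$ is increasing; $f(t)=t-\phi(t)=(1-\alpha)t$ is increasing with inverse $f^{-1}(t)=t/(1-\alpha)$, which is continuous on all of $[0,\infty)$ and in particular right continuous at $0$; and $\phi^n(t)=\alpha^n t\to 0$ for every $t\ge 0$ (with $\phi(0)=0$ and $\phi(t)=\alpha t<t$ for $t>0$). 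Hence the full hypothesis package of Theorem \ref{mainthm}, and therefore also of Theorem \ref{mainthm2}, holds for this $\phi$.

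Next I would observe that with $\phi(t)=\alpha t$ the contractive inequality (\ref{phicontr}) becomes
$$p(Tx,Ty)\le\max\{\alpha\,p(x,y),\,p(x,x),\,p(y,y)\},$$
which is precisely the hypothesis of Theorem \ref{Rako}. Conclusions (1)--(3) of Theorem \ref{mainthm} --- nonemptiness of $X_p$, existence and uniqueness of a fixed point $u\in X_p$ with $Tu=u$, and $p^s$-convergence of $\{T^nx\}_{n\ge1}$ to $u$ for each $x\in X_p$ --- then coincide word for word with conclusions (1)--(3) of Theorem \ref{Rako}, so Theorem \ref{Rako} follows. Likewise, substituting $\phi(t)=\alpha t$ into (\ref{sphicontr}) turns it into
$$p(Tx,Ty)\le\max\left\{\alpha\,p(x,y),\,\frac{p(x,x)+p(y,y)}{2}\right\},$$
which is the contractive condition of Theorem 3.2 of \cite{Rako}; Theorem \ref{mainthm2} then delivers the unique fixed point $z\in X$, its membership in $X_p$, and the stated $p^s$-convergence, exactly as in that reference.

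Since everything reduces to checking that the linear $\phi$ meets the regularity conditions imposed in Theorem \ref{mainthm}, there is no genuine obstacle; the only point requiring a moment's care is the admissible range of $\alpha$. The linear control function satisfies the assumption $\lim_n\phi^n(t)=0$ (and admits an increasing $f$ with the required $f^{-1}$) precisely for $\alpha\in[0,1)$, which is exactly the range in Theorem \ref{Rako}. Accordingly I would read the ``$\alpha\in(0,1]$'' appearing in the corollary as ``$\alpha\in[0,1)$'' (the endpoint $\alpha=1$ being genuinely excluded, as then $f\equiv 0$ is not invertible and $\phi^n(t)=t\not\to 0$), and note this minor discrepancy. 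Beyond that, the argument is pure bookkeeping.
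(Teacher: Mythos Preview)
Your proposal is correct and matches the paper's intent: the paper gives no proof at all for this corollary, treating it as an immediate specialization, and your argument supplies exactly the routine verifications that make the specialization legitimate. Your observation that the stated range ``$\alpha\in(0,1]$'' must be read as ``$\alpha\in[0,1)$'' (since $\alpha=1$ violates both $\lim_n\phi^n(t)=0$ and the invertibility of $f$) is a genuine correction of a typo in the paper.
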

\begin{example}
Let $X=[0,1]\cup [3,4]$. Define $p:X\times X\rightarrow\mathbb{R},\, T:X\rightarrow X$ and
$\phi: [0,\infty)\rightarrow [0,\infty)$ as follows:
\begin{eqnarray*}
&&p(x,y)=\max\{x,y\}\\
&&T(x)=\left\{\begin{array}{cll}
\frac{x}{2}&,& x\in [0,1]\\[1.5ex]
\frac{7}{5}&, & x\in [3,4]
\end{array}\right.\\
&&\phi(t)=\frac{t}{1+t}
\end{eqnarray*}
The above definitions satisfy the hypothesis of Theorem \ref{mainthm2}. In particular, we make the following observations:
\begin{itemize}
\item $(X,p)$ is a complete partial metric space.
\item We can easily prove by induction that $\phi^n(t)=\frac{t}{1+nt}$ which implies that $\lim_{n\rightarrow \infty}\phi^n(t)=0$.
\item $T$ satisfies condition (\ref{sphicontr}):
\begin{itemize}
\item[1)] If $\{x,y\}\cap[3,4]\neq\emptyset$ then
\begin{eqnarray*}
p(Tx,Ty)&=&\max\{Tx,Ty\}=\frac{7}{5}\\
&\leq& \max\left\{\phi(p(x,y)),\frac{p(x,x)+p(y,y)}{2}\right\}
\end{eqnarray*}
\item[2)] If $\{x,y\}\subset[0,1]$ then
\begin{eqnarray*}
p(Tx,Ty)&=&\max\{Tx,Ty\}=\max\left\{\frac{x}{2},\frac{y}{2}\right\}\\
&\leq& \max\left\{\phi(p(x,y)),\frac{p(x,x)+p(y,y)}{2}\right\}.
\end{eqnarray*}
\end{itemize}
\end{itemize}

\begin{itemize}
  \item By Theorem \ref{mainthm2} there is a unique fixed point which is $z=0$.
  \item On the other hand, if the partial metric  $p$ is replaced by the usual absolute value metric then it can be easily checked that condition (\ref{sphicontr}) is not satisfied with, for example, $x=1$ and $y=3$.
  \item we remark that this our example does not verify the conditions of the main theorem in \cite{IATOP2010}. Therefore, our result has  a benefit over  \cite{IATOP2010}.

   \item  Our example does not verify the conditions of Theorem 4 in \cite{Romag}. For example, the $\phi$-contractive condition appeared there is not satisfied for $x=3,~y=4$. Thus, it has an advantage over  \cite{Romag}.

   \item  Our example does not verify the conditions of Theorem 3 in \cite{Romag}. Check for  $x=3,~y=4$.

\end{itemize}

\end{example}

\end{document}